\def\th@plain{%
  \thm@notefont{}
  \itshape 
}
\def\th@definition{%
  \thm@notefont{}
  \normalfont 
}
\newtheorem{lemma}{Lemma}[section]
\newtheorem{proposition}[lemma]{Proposition}
\newtheorem{remark-definition}[lemma]{Remark-Definition}
\newtheorem{theorem}[lemma]{Theorem}
\newtheorem{corollary}[lemma]{Corollary}
\newtheorem{proposition-conjecture}[lemma]{Proposition-conjecture}
\theoremstyle{definition}
\newtheorem{example}[lemma]{Example}
\newtheorem{definition}[lemma]{Definition}
\newtheorem{remark}[lemma]{Remark}
\newcommand{\Ker}{\mathrm{Ker}\,}
\newcommand{\St}{\mathrm{St}}
\newcommand{\rk}{\mathrm{rk}\,}
\newcommand{\gl}{\mathrm{gl}\,}
\newcommand{\const}{\mathrm{const}}
\newcommand{\goth}{\mathfrak}
\newcommand{\codim}{\mathrm{codim}\,}
\newcommand{\Sing}{\mathsf{Sing}}
\newcommand{\svert}{{\mathsf v}_{\mathrm{tot}}}
\newcommand{\shor}{{\mathsf h}_{\mathrm{tot}}}
\newcommand{\nv}{n_{{v}}}
\newcommand{\nh}{n_{{h}}}
\newcommand{\dimO}{\dim \mathcal O_{\mathrm{reg}}}
\newcommand{\codimO}{\codim \mathcal O_{\mathrm{reg}}}
\newcommand{\dimSt}{\dim \mathrm{St}_{\mathrm{reg}}}
\newcommand\trdeg{\mathrm{tr.deg.}\,}
 \renewcommand{\C}{\mathbb{C}}
 \newcommand{\C}{\mathbb{C}}
\newcommand\CP{\mathbb{C}\mathbb{P}}
 \renewcommand{\N}{\mathbb{N}}
 \newcommand{\N}{\mathbb{N}}
\newcommand{\charp}{\chi}
\newcommand{\Hom}{\mathrm{Hom}}
\renewcommand{\Im}{\mathrm{Im}\,}
\newcommand{\ivan}[1]{\textit{\color{blue}#1}}
\newcommand\g{\goth{g}}
\title{Jordan--Kronecker invariants of Lie algebra representations and degrees of invariant polynomials}
\author{Alexey Bolsinov\thanks{School of Mathematics,
 Loughborough University and Faculty of Mechanics and Mathematics, Moscow State University. E-mail: {\tt A.Bolsinov@lboro.ac.uk}},  Anton Izosimov\thanks{Department of Mathematics, University of Arizona. E-mail:  {\tt izosimov@math.arizona.edu}}, and Ivan Kozlov\thanks{Faculty of Mechanics and Mathematics, Moscow State University. E-mail: {\tt ikozlov90@gmail.com} }
}
\begin{document}

\date{}

\maketitle

\abstract{For an arbitrary representation $\rho$ of a complex finite-dimensional Lie algebra, we construct a collection of numbers that we call the \textit{Jordan-Kronecker invariants} of $\rho$. Among other interesting properties, these numbers provide lower bounds for degrees of polynomial invariants of $\rho$. Furthermore, we prove that these lower bounds are exact if and only if the invariants are independent outside of a set of large codimension.  Finally, we show that under certain additional assumptions our bounds are exact if and only if the algebra of invariants is freely generated. }

\tableofcontents

\section{Introduction}
\label{intro}

The main idea of this paper has its roots in the theory of bi-Hamiltonian systems where it was discovered that
the algebraic structure of a pair of compatible Poisson brackets $\{\,,\}_0$ and $\{\,,\}_1$ essentially affects the differential geometry of the pencil $\{\,,\}_0 + \lambda \{\,,\}_1$ and even the dynamical properties of bi-Hamiltonian systems related to it (see, e.g., \cite{turiel, Zakh, Panas, BolsOsh, BolsIzos}).  This observation has recently been used in \cite{BolsZhang} to introduce Jordan--Kronecker invariants for a finite-dimensional Lie algebra $\goth g$, which are directly related to a natural pencil of compatible Poisson brackets on $\goth g^*$. From the algebraic viewpoint, this construction is based on a simple fact that every element $x\in \goth g^*$ defines a natural skew-symmetric bilinear form $\mathcal A_x(\xi,\eta)=\langle x,[\xi,\eta]\rangle$ on $\goth g$. The Jordan--Kronecker invariant of $\goth g$ is, by definition, the algebraic type of the pencil of forms $\mathcal A_{a+ \lambda b}$ for a generic pair $(a,b)\in \goth g^*\times \goth g^*$. 
All possible algebraic types are described by the Jordan--Kronecker theorem on a canonical form of a pencil of skew-symmetric matrices (see, e.g., \cite{Thompson, Gantmaher88}) and, in each dimension, there are only finitely many of them.

In the present paper, this construction is generalized to arbitrary finite-dimensional representations of finite-dimensional Lie algebras. In particular, we show how the classical theorem on the canonical form of a pair of linear maps can be applied in the study of Lie algebra representations and their invariants. In what follows, we assume that all objects are defined over the field $\C$ of complex numbers, although everything can be generalized to the case of an arbitrary field of characteristic zero (in which case some of the objects we consider are defined over the algebraic closure of the initial field).

Our construction can be outlined as follows. Let  $\rho :
\mathfrak{g} \to \gl (V)$ be a linear representation of a finite-dimensional Lie algebra $\mathfrak{g}$ on a finite-dimensional vector space $V$. To this representation and an arbitrary element $x \in  V$, one can naturally associate an operator $R_x :
\mathfrak{g} \to V$ defined by $R_x (\xi) = \rho(\xi)
x$. Consider a pair of such operators $R_a, R_b$ and the pencil $R_a+\lambda R_b=R_{a+\lambda b}$ generated by them. It is well known that such a pencil can be completely characterized by a collection of quite simple numerical invariants
 (see Section~\ref{S:JK_Operator} for details). In the present paper we show that many important and interesting properties of the representation $\rho : \mathfrak{g} \to \gl (V)$ are related to and can be derived from the invariants of such a  pencil $R_{a+\lambda b}$ generated by a generic pair $(a,b)\in V\times V$. In particular, one of our main results gives lower bounds for degrees of invariant polynomials of the representation $\rho$ in terms of the numerical invariants of the associated pair of operators $R_a, R_b$ (Theorem~\ref{T:SumDeg_Polyn}). Furthermore, we show that these lower bounds are exact if and only if the invariant polynomials of $\rho$ are independent outside of a subset of codimension $\geq 2$ (Theorem \ref{thm2}). Finally, we prove that under certain additional assumptions our bound is exact if and only if the algebra of invariant polynomials of $\rho$ is freely generated (Theorem~\ref{thm3}). This generalizes several previously known results (see, in particular, \cite{Ooms2, JS, OVDB}) which relate polynomiality of the algebra of invariants with the \textit{sum} of degrees of the invariants (while our statements concern \textit{individual} invariants). In the last Section \ref{S:SumsStandRepr} we show that our results have quite non-trivial implications already in the case of standard representations of simple Lie groups. 

\par\medskip
 
\textbf{Acknowledgements} The work of A.\,V.~Bolsinov and I.\,K.~Kozlov was supported by the Russian Science Foundation (project No.17-11-01303).

\section{Canonical form of a pair of linear maps}
\label{S:JK_Operator}

In this section, we recall the normal form theorem for a pair of linear maps.  We first state the theorem in the matrix form, and then discuss the invariant meaning of the ingredients involved.

\begin{theorem}[On the Jordan-Kronecker normal form \cite{Gantmaher88}]
\label{T:JK_operator}
Consider two complex vector spaces
$U$ and $V$. Then for every two linear maps $A, B: U\to V$ there are bases in  $U$ and $V$ in which the matrices of the pencil $\mathcal P=\{A + \lambda B\}$ have the following block-diagonal form:
\begin{equation}
\label{Eq:JK_Operator}
{\footnotesize A + \lambda B =
\begin{pmatrix}
0_{m, n} &     &        &      \\
    & \!\!\! A_1 + \lambda B_1 &        &      \\
    &     & \!\!\!\! \ddots &      \\
    &     &        & \!\!\!  A_k + \lambda B_k  \\
\end{pmatrix},
}
\end{equation}
where $0_{m, n}$ is the zero $m \times n$-matrix, and each pair of the corresponding blocks   $A_i$ and $B_i$ takes one of the following forms:

1. Jordan block with eigenvalue $\lambda_0 \in \mathbb{C}$
 \[
 A_i =\left( \begin{matrix}
   \lambda_0 &1&        & \\
      & \lambda_0 & \ddots &     \\
      &        & \ddots & 1  \\
      &        &        & \lambda_0   \\
    \end{matrix} \right),
\quad  B_i=  \left( \begin{matrix}
    -1 & &        & \\
      & -1 &  &     \\
      &        & \ddots &   \\
      &        &        & -1   \\
    \end{matrix} \right).
\]

2. Jordan block with eigenvalue $\infty$
\[
A_i = \left( \begin{matrix}
   1 & &        & \\
      &1 &  &     \\
      &        & \ddots &   \\
      &        &        & 1   \\
    \end{matrix}  \right),
\quad B_i = \left( \begin{matrix}
    0 & -1&        & \\
      & 0 & \ddots &     \\
      &        & \ddots & -1  \\
      &        &        & 0   \\
    \end{matrix}  \right).
 \]

  3. Horizontal Kronecker block 
\[
A_i= \left(
 \begin{matrix}
    0 & 1      &        &     \\
      & \ddots & \ddots &     \\
      &        &   0    & 1  \\
    \end{matrix}  \right),\quad
B_i = \left(\begin{matrix}
   -1 & 0      &        &     \\
      & \ddots & \ddots &     \\
      &        & -1    &  0  \\
    \end{matrix}  \right).
    \]

   4. Vertical Kronecker block
\[
A_i= \left(
  \begin{matrix}
  0  &        &    \\
  1   & \ddots &    \\
      & \ddots & 0 \\
      &        & 1  \\
  \end{matrix}
 \right), \quad
B_i = \left(
  \begin{matrix}
  -1  &        &    \\
  0   & \ddots &    \\
      & \ddots & -1 \\
      &        & 0  \\
  \end{matrix}
 \right).
\]

 The number and types of blocks in decomposition  \eqref{Eq:JK_Operator} are uniquely defined up to permutation.
 \end{theorem}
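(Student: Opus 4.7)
The plan is to follow the classical Kronecker--Weierstrass strategy: first peel off a common kernel and cokernel of $A$ and $B$, then separate the pencil into a \emph{singular} part (producing Kronecker blocks) and a \emph{regular} part (producing Jordan blocks), and finally apply ordinary Jordan normal form to the regular part. Induction on $\dim U+\dim V$ will close the argument.

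\textbf{Step 1 (trivial reduction).} Set $U_0=\Ker A\cap\Ker B\subset U$ and pick a complement $V_0$ of $\Imm A+\Imm B$ in $V$. Choosing arbitrary complements to $U_0$ in $U$ and to $V_0$ in $V$, the parts of $A,B$ living in $U_0\to V$ and $U\to V_0$ are identically zero, so they contribute exactly the block $0_{g,h}$ in \eqref{Eq:JK_Operator}. After this reduction I may assume $\Ker A\cap\Ker B=0$ and $\Imm A+\Imm B=V$.

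\textbf{Step 2 (singular part).} View $A+\lambda B$ as a $\mathbb{K}[\lambda]$-linear map between the free modules $\mathbb{K}[\lambda]^{\dim U}$ and $\mathbb{K}[\lambda]^{\dim V}$. If its kernel is nonzero, pick a polynomial solution $u(\lambda)=u_0+\lambda u_1+\cdots+\lambda^{\varepsilon}u_{\varepsilon}$ of smallest possible degree $\varepsilon$. Collecting coefficients of powers of $\lambda$ in $(A+\lambda B)u(\lambda)=0$ yields
\[
Au_0=0,\quad Au_1+Bu_0=0,\ \ldots,\ Au_\varepsilon+Bu_{\varepsilon-1}=0,\quad Bu_\varepsilon=0.
\]
The crucial lemma is that $u_0,\dots,u_\varepsilon$ are linearly independent in $U$ and that $Bu_0,\dots,Bu_{\varepsilon-1}$ are linearly independent in $V$; otherwise a short linear combination produces a kernel vector of degree $<\varepsilon$, contradicting minimality. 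Setting $U'=\mathrm{span}(u_0,\dots,u_\varepsilon)$ and $V'=\mathrm{span}(Bu_0,\dots,Bu_{\varepsilon-1})$, the restriction of the pencil to $U'\to V'$ is precisely a horizontal Kronecker block of size $\varepsilon$. A further transversality argument, again exploiting minimality of $\varepsilon$, furnishes direct complements $U=U'\oplus U''$ and $V=V'\oplus V''$ on which $A$ and $B$ are block-diagonal. Applying induction to the residual pencil $U''\to V''$, and doing the symmetric work on the cokernel of $A+\lambda B$ to extract vertical Kronecker blocks, disposes of the entire singular part.

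\textbf{Step 3 (regular part).} Once the singular part is stripped off, $\dim U=\dim V$ and $\det(A+\lambda B)\not\equiv 0$. By algebraic closure I can choose $\lambda_0\in\mathbb{K}$ with $A+\lambda_0 B$ invertible; after substituting $A\mapsto A+\lambda_0 B$ one may assume $A$ itself is invertible. The pencil then becomes equivalent to $I+\lambda A^{-1}B$, and the classical Jordan theorem for the single operator $A^{-1}B$ decomposes $U$ into cyclic subspaces: invertible cells of $A^{-1}B$ with eigenvalue $\mu\neq 0$ yield blocks of type~1 (eigenvalue $-\mu^{-1}$, shifted back by $-\lambda_0$), while nilpotent cells yield blocks of type~2 (eigenvalue $\infty$).

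\textbf{Main obstacle.} The heart of the proof is the splitting argument in Step~2: showing that a minimal-degree kernel vector produces linearly independent systems on both sides \emph{and} that the resulting Kronecker piece is an actual direct summand of the pair $(A,B)$, not merely a subrepresentation. This transversality requires careful exchange moves in $U$ and $V$, combined with the minimality of $\varepsilon$. Uniqueness of the block data then follows by interpreting elementary divisors and minimal indices invariantly: the former are the elementary divisors of the Smith normal form of $A+\lambda B$ over $\mathbb{K}[\lambda]$, and the latter are numerical invariants of the kernel and cokernel $\mathbb{K}[\lambda]$-modules of the pencil.
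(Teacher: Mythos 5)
The paper does not prove this theorem at all: it is recalled as a classical result and attributed to Gantmacher \cite{Gantmaher88}, so there is no in-paper argument to compare yours against. Judged on its own, your outline is the standard Kronecker--Weierstrass proof and its architecture is sound: the reduction of $0_{g,h}$ to $\Ker A\cap\Ker B$ and a complement of $\Imm A+\Imm B$ is correct (these account exactly for the zero minimal column and row indices), the minimal-degree kernel vector over $\mathbb K[\lambda]$ is the right device for extracting a horizontal Kronecker block, duality handles the vertical ones, and the regular square residue is correctly dispatched by choosing $\lambda_0$ with $A+\lambda_0 B$ invertible and applying Jordan form to $(A+\lambda_0B)^{-1}B$, with nilpotent cells giving the $\infty$-blocks. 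You have also correctly located where the real work lives: the claim that the span of $u_0,\dots,u_\varepsilon$ and of $Bu_0,\dots,Bu_{\varepsilon-1}$ admits an \emph{invariant complement} on which $A$ and $B$ restrict is the hard lemma (Gantmacher spends several pages on it), and in your write-up it is asserted rather than proved; as a complete proof this is the one genuine gap, though it is a known and fillable one. One smaller inaccuracy in the uniqueness discussion: the Smith normal form of $A+\lambda B$ over $\mathbb K[\lambda]$ does not detect the elementary divisors at $\lambda=\infty$ (blocks of type 2); for those you must pass to the homogeneous pencil $\mu A+\lambda B$ and the polynomials $D_r(\lambda,\mu)$, exactly as the paper does in its discussion following Corollary~\ref{cor1}.
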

\begin{remark}
The negative signs in the matrices $B_i$ are not important and are introduced to simplify some of the formulas to follow.
\end{remark}
\begin{remark}
In what follows, we interchangeably use the following two notions of a pencil:  $\{A + \lambda B\}$ and  $\{ \alpha A + \beta B\}$. These two notions are equivalent if we allow $\lambda$ to be infinite (in which case $A + \lambda B = B$) and do not distinguish between operators that are scalar multiplies of each other. Under this convention, $(\beta : \alpha)$ in $\{ \alpha A + \beta B\}$ are simply homogeneous coordinates of $\lambda \in \bar\C$ in  $\{A + \lambda B\}$ .
%
%
\end{remark}

It is convenient to regard the zero block $0_{m, n}$ in \eqref{Eq:JK_Operator} as a block-diagonal matrix that is composed of $m$ vertical Kronecker blocks of size  $1\times 0$ and $n$ horizontal Kronecker blocks of size $0\times 1$.

\begin{definition}
The \textit{horizontal indices} $\mathsf h_1, \dots, \mathsf h_p$ of the pencil $\mathcal P=\{A+\lambda B\}$ are defined to be the horizontal dimensions (widths) of horizontal Kronecker blocks (i.e. each horizontal index is the number of columns in the corresponding horizontal Kronecker block). Similarly, the \textit{vertical indices} $\mathsf v_1, \dots, \mathsf v_q$  are the vertical dimensions (heights) of vertical blocks.
\end{definition}
In particular, in view of the above interpretation of the $0_{m,n}$ block, the first $m$ horizontal indices and first $n$ vertical indices are equal to $1$. 
We will denote the total number $p$ of horizontal indices by $\nh$, and the total number $q$ of vertical indices by $\nv$.
\begin{remark}\label{minIndices}
There also exist closely related notions of \textit{minimal row and column indices}. Namely, minimal column indices are equal to the numbers $\mathsf h_i -1$, while minimal row indices are equal to $\mathsf v_i - 1$. However, we find the terms \textit{horizontal and vertical indices} more intuitive and more suitable for our purposes.
\end{remark}
\begin{definition}
The total number of columns in horizontal blocks
$\shor = \sum_{i=1}^{\nh} \mathsf h_i$ is said to be  the {\it total Kronecker $h$-index} of the pencil $\mathcal P$. Similarly,  the total number of rows in the vertical Kronecker blocks  $\svert= \sum_{i=1}^{\nv} \mathsf v_j$ is said to be the {\it total Kronecker $v$-index} of $\mathcal P$.
\end{definition}


We now give an invariant interpretation for eigenvalues of Jordan blocks, as well as for vertical and horizontal indices. We begin with Jordan blocks. 
\begin{definition}
The \textit{rank of the pencil} $\mathcal P=\{A+\lambda B\}$ is the number $\rk \mathcal P = \max_{\lambda\in \mathbb C} \rk (A+\lambda B)$. 
\end{definition}
\begin{definition}
The {\it characteristic polynomial}  $\charp(\alpha, \beta)$ of the pencil $\mathcal P$ is defined as the greatest common divisor of all  the $r\times r$ minors of the matrix  $\alpha A + \beta B$,  where 
$r=\rk \mathcal P$.
\end{definition} One can show that the polynomial $\charp(\alpha, \beta)$ does not depend on the choice of bases and therefore is an invariant of the pencil. Furthermore, it is easy to see that $\charp(\alpha, \beta)$ is the product of characteristic polynomials of all the Jordan blocks. These polynomials, in turn, are called {\it elementary divisors} of the pencil and also admit a natural invariant interpretation, see \cite{Gantmaher88} for details.

\begin{proposition}
\label{cor2}
The eigenvalues of Jordan blocks can be characterized as those  $\lambda\in\mathbb \C$ for which the rank of  $A+\lambda B$ drops,  i.e.  $\rk (A+\lambda B) < r=\rk \mathcal P$. The infinite eigenvalue appears in the case when $\rk B <r$. In other words, the eigenvalues of Jordan blocks, written as $\lambda = (\beta : \alpha) \in \CP^1$, are solutions of the characteristic equation $\charp(\alpha, \beta)=0$. Moreover, multiplicity of each eigenvalue coincides with the multiplicity of the corresponding root of the characteristic equation. Jordan blocks are absent if and only if the rank of all non-trivial linear combinations $\alpha A + \beta B$ is the same.

\end{proposition}
\begin{proof}
This is easily verified when $A$ and $B$ are written in the Jordan-Kronecker form~\eqref{Eq:JK_Operator}. The crucial point is that $\rk (A_i+\lambda B_i) = \const$ for any pair $(A_i,B_i)$ of Kronecker blocks, so only Jordan blocks affect the dependence of $\rk (A_i+\lambda B_i)$ on $\lambda$.
\end{proof}
Now we discuss the meaning of vertical and horizontal indices. First of all, the total numbers of horizontal and vertical indices can be defined as the \textit{coranks} of the pencil:
\begin{proposition}
\label{cor1}
Let $\rk P= \max_{\lambda\in \mathbb C} \rk (A+\lambda B)$ be the rank of the pencil $\mathcal P=\{A+\lambda B\}$ of operators $A,B : U \to V$. Then:

\begin{enumerate} \item The number $\nh$ of horizontal indices (or equivalently, the number of horizontal Kronecker blocks)  is equal to $\dim U - \rk P$.

\item The number $\nv$ of vertical indices (or equivalently, the number of vertical Kronecker blocks)  is equal to $\dim V - \rk P$.

\end{enumerate}

In other words, $\nh=\dim\Ker (A+\lambda B)$ and $\nv=\dim\Ker (A+\lambda B)^*$ for generic $\lambda\in\mathbb C$.
\end{proposition}
\begin{proof}
Indeed, for generic $\lambda$ the kernel of $A_i+\lambda B_i$ is one-dimensional if the pair blocks $(A_i,B_i)$ is horizontal Kronecker, and is trivial otherwise. So, $\dim\Ker (A+\lambda B)$ is equal to the number of horizontal blocks. Similarly, $\dim\Ker (A+\lambda B)^*$ is the number of vertical blocks.\end{proof}

We will also need the following relation between the total indices and the degree of the characteristic polynomial:
\begin{proposition} We have
\begin{equation}
\label{sumofsum}
\svert + \shor = \dim V + \dim U - \rk P - \deg \charp.
\end{equation}
\end{proposition}
\begin{proof}
The total numbers columns of matrices $A$ and $B$ is equal to $\dim U$. On the other hand, this number can be computed as the total number of columns in Jordan blocks, which is equal to $\deg \charp$, plus the total number of columns in horizontal Kronecker blocks, which is equal to $\shor$, plus the total number of columns in vertical blocks, which is equal to $\svert - \nv = \svert - \dim V + \rk P$ (see Proposition \ref{cor1}). The result follows.
\end{proof}

Finally, we characterize the horizontal and vertical indices themselves. We begin with the following preliminary statement.

\begin{proposition}\label{prop:kerpencil}
Let $A$ be regular in a pencil $\mathcal P=\{ A+\lambda B\}$, i.e. $\rk A = \rk \mathcal P$. Then for every $u_0\in \Ker A$ there exists a sequence of vectors  $\{u_0, \dots, u_l \in U\}$ such that the expression $u(\lambda)=\sum_{j=0}^{l} u_j
\lambda^j$ is a solution of the equation
\begin{equation}
\label{onceagain}
(A+ \lambda B) u(\lambda)=0.
\end{equation}
Similarly, for any $u_0\in \Ker A^*$ there exists a sequence of vectors  $\{u_0, \dots, u_l \in V^*\}$ such that the expression $u(\lambda)=\sum_{j=0}^{l} u_j
\lambda^j$ is a solution of the equation
\begin{equation}
\label{onceagaindual}
(A+ \lambda B)^* u(\lambda)=0.
\end{equation}

\end{proposition}
\begin{proof}
We prove only the first statement, as the second one is similar. Notice that since $A$ is regular, its kernel is generated by basis vectors in $U$ corresponding to the leftmost columns in horizontal Kronecker blocks.  So, by linearity, it suffices to consider the case when $u_0$ is one of those basis vectors. In this case, as $u_1, u_2, \dots$, one takes the remaining basis vectors in $U$ generating the given horizontal block (which, in particular, means that the number $l$ is equal to one of the horizontal indices minus one).
\end{proof}

\begin{proposition}
\label{L:ChainsRestr}
Let $\mathcal P=\{A+\lambda B\}$ and $A\in\mathcal P$ be regular. Let also
$$
u_i(\lambda) = \sum_{j = 0}^{\deg u_i} u_{ij} \lambda^j,\quad i = 1, \dots, l,
$$
where $u_{ij} \in U$, be polynomial solutions of \eqref{onceagain} such that the vectors $u_i(0) = u_{i0}$ are linearly independent. 
Suppose also that $ \deg u_{1} \leq \dots \leq \deg u_l.$ 
Then,
for each $i=1, \dots, l$ we have \begin{equation}\label{firstDegreeIneq}\deg u_i \geq \mathsf h_i - 1,\end{equation} 
where $\mathsf h_1 \leq \mathsf h_2 \leq \dots$ is the ordered sequence of horizontal indices of $\mathcal P$.

Similarly, if $u_i$'s are polynomial solutions of the dual problem \eqref{onceagaindual}, then \begin{equation}\label{firstDegreeIneqVert}\deg u_i \geq \mathsf v_i - 1,\end{equation} 
where $\mathsf v_1 \leq \mathsf v_2 \leq \dots$ are vertical indices of $\mathcal P$.

%
\end{proposition}
%

Note that the proof of Proposition \ref{prop:kerpencil} provides a way to construct polynomials $u_i(\lambda)$ for which inequalities \eqref{firstDegreeIneq} and \eqref{firstDegreeIneqVert} become equalities. This, along with Proposition \ref{L:ChainsRestr}, immediately implies the following:
\begin{corollary}
\label{L:ChainsRestrCor}
Horizontal  indices $\mathsf h_1, \dots, \mathsf h_p$ are given by $\mathsf h_i = r_i + 1$, where $r_1, \dots, r_p$ are the {minimal} degrees of independent solutions of \eqref{onceagain}. Similarly, vertical  indices $\mathsf v_1, \dots, \mathsf v_q$ are given by $\mathsf v_i = r'_i + 1$, where $r'_1, \dots, r'_q$ are the {minimal} degrees of independent solutions of the dual problem \eqref{onceagaindual}.
\end{corollary}
\begin{remark}
Recall that the numbers $\mathsf h_i -1$, $\mathsf v_i - 1$ are called {minimal indices} (see Remark \ref{L:ChainsRestrCor}). Corollary \ref{L:ChainsRestrCor} explains in which sense these numbers are minimal. 
\end{remark}

\begin{proof}[Proof of Proposition \ref{L:ChainsRestr}]
We only consider the horizontal case, as the vertical one is analogous. 
Begin with the first statement. Let $\tilde u_i(\lambda)$, $i = 1, \dots, \nh$, be polynomial solutions of \eqref{onceagain} constructed in the proof of Proposition \ref{prop:kerpencil}, with $\deg \tilde u_i = \mathsf h_i -1 $. For dimension reasons, these solutions form a basis in the kernel of $A+\lambda B$, when the latter is considered as a matrix over the field of formal Laurent series (we consider Laurent series that are finite in the negative direction). Therefore, we have
\begin{equation}\label{eq:pencilKerBasis}
u_i(\lambda) = \sum_{j = 1}^{\nh} f_{ij}(\lambda) \tilde u_j(\lambda),
\end{equation}
where $f_{ij}$'s are formal Laurent series. Furthermore, from the linear independence of coefficients of $\tilde u_j$'s it follows that $f_{ij}$'s are in fact power series (otherwise the left-hand side would have terms of negative degree in $\lambda$). Now assume that \eqref{firstDegreeIneq} does not hold for some $i = k$, and $k$ is the minimal number with this property. Then
$$
\deg u_1 \leq \dots \leq \deg u_{k} < \mathsf h_k -1 = \deg \tilde u_k  \leq \dots \leq  \deg \tilde u_{p},
$$
which, along with independece of coefficients of $\tilde u_j$'s, implies $f_{ij} = 0$ for $i \leq k, j \geq k$. But this means that the rank of the matrix $f_{ij}$ is not maximal, which contradicts linear independence of the vectors $u_i(0)$. So, the proposition is proved. 
%
\end{proof}



\section{Jordan--Kronecker invariants of Lie algebra representations}
In this section we define the main object of the present paper: Jordan--Kronecker invariants of Lie algebra representations.\par
Consider a finite-dimensional linear representation  $\rho: \goth g \to \gl(V)$ of a finite-dimensional Lie algebra $\goth g$.  To each point  $x\in V$, the representation $\rho$ assigns a linear operator  $R_x: \goth g\to V$, $R_x(\xi) =  \rho(\xi) x \in V$.  Since the mapping $x \mapsto R_x$ is in essence equivalent to $\rho$,  many natural algebraic objects related to $\rho$ can be defined in terms of $R_x$.

\begin{example}
The \textit{stabilizer} of $x \in V$  can be defined as
\[
\St_x = \Ker R_x = \{\xi\in\goth g~|~ R_x(\xi)=\rho(\xi) x =0 \}\subset \goth g.
\]
\end{example}

Now consider the pencil of such operators generated by a pair of vectors  $a, b\in V$. By the {\it algebraic type} of a pencil $R_a + \lambda R_b=R_{a+\lambda b}$, we will understand the following collection of discrete invariants:

\begin{itemize}

\item the number of distinct eigenvalues of Jordan blocks,

\item the number and sizes of the Jordan blocks associated with each eigenvalue,

\item horizontal and vertical  indices.

\end{itemize}

\begin{proposition}
\label{A:GenLinCombGen}
The algebraic type of a pencil $R_a + \lambda R_b$ does not change under replacing $a$ and  $b$ with any linearly independent combinations of them $a'=\alpha a+\beta b$  and $b'=\gamma a + \delta b$.
\end{proposition}

In other words, the type characterizes two-dimensional subspaces in  $V$ or,  which is the same, one-dimensional projective subspaces in the projectivization of $V$.

\begin{proof}
It is easy to see that replacing two operators with their independent linear combinations does not change the discrete invariants in the Jordan-Kronecker normal form (only eigenvalues of the Jordan blocks do change).
\end{proof}

Since the number of different algebraic types is finite, it is easily seen that in the space $V\times V$ there exists a non-empty Zariski open subset of pairs $(a,b)$ for which the algebraic type of the pencil $R_{a + \lambda b}$ will be one and the same (cf. \cite[Proposition 1]{BolsZhang}).

\begin{definition}
\label{generic}
A pair $(x, a)\in V\times V$ from this subspace and the corresponding pencil $R_{a+\lambda b}$ will be called {\it generic}.
\end{definition}

\begin{definition}
{\it The Jordan--Kronecker invariant} of $\rho$ is the algebraic type of a~generic pencil $R_{a + \lambda b}$.
\end{definition}

In particular, horizontal and vertical indices of a generic pencil will be denoted by
 $\mathsf h_1(\rho),\dots,\mathsf h_p(\rho)$ and $\mathsf v_1(\rho), \dots, \mathsf v_q(\rho)$ and will be called {\it horizontal and vertical indices} of the representation $\rho$.

\section{Interpretation of Jordan-Kronecker invariants} \label{S:InterpretJK}

In this section we give an interpretation for some of the Jordan-Kronecker invariants. All properties we discuss here are quite elementary but will be useful in the sequel. We begin with the Jordan part.
\begin{definition}
A point $a \in V$ is called {\it regular}, if
\[
\dim \St_a \leq \dim \St_x \quad \text{for all } x \in V.
\]
Those points which are not regular are called {\it singular}.\end{definition}

 The set of singular points will be denoted by $\Sing \subset V$. In terms of  $R_x$ we have
$$
\Sing = \{ y\in V~|~ \rk R_y < r= \max_{x\in V} \rk R_x\}.
$$
The dimension of the stabilizer of a regular point is a natural characteristic of $\rho$ and we will denote it by $\dimSt$.  Though in our paper we never use the action of the Lie group  $G$ associated with the Lie algebra $\goth g$, it will be convenient to keep in mind the action and its orbits.  We will need, however, not the orbits themselves but their dimensions only. In particular,  for the dimension of a regular orbit we will use the notation $\dimO$. Notice that
$$
T_x\mathcal O_x = \mathrm{Im}\, R_x \quad \mbox{and} \quad \dim \mathcal O_x = \rk R_x.
$$



\begin{proposition} \label{Prop:EigenSing}
\begin{enumerate}
\item The eigenvalues of Jordan blocks of a pencil $R_{a+\lambda b}$ are those values of  $\lambda\in\mathbb C$ for which the line $a+ \lambda b$ intersects the singular set $\Sing$. In particular, $R_{a+\lambda b}$ has a Jordan block with eigenvalue $\infty$ if and only if $a$ is singular.
\item A generic pencil $R_a + \lambda R_b$ has no Jordan blocks if and only if the codimension of the singular set  $\Sing$ is greater or equal than  $2$.
\end{enumerate}
\end{proposition}

\begin{proof}
The first statement is an immediate corollary of Proposition \ref{cor2}. Furthermore, it follows from Proposition \ref{cor2} that a generic pencil $R_a + \lambda R_b$ has no Jordan blocks if and only if all these operators are of the same rank, i.e. a generic line $a+\lambda b$  does not intersect the singular set $\Sing$.
Clearly, the latter condition is fulfilled if and only if $\codim \Sing \geq 2$.
\end{proof}


 Let us discuss the case $\codim\Sing =1$  in more detail. Consider the matrix of the operator $R_x$ and take all of its minors of size $r\times r$, where $r=\dimO$, that do not vanish identically (such minors certainly exist). We consider them as polynomials  $p_1(x),\dots, p_N(x)$ on $V$. The singular set  $\mathsf{Sing}\subset V$ is then given by the system of polynomial equations
$$
p_i(x)=0,  \quad i=1,\dots, N.
$$

This set is of codimension one if and only if these polynomials possess a non-trivial greatest common divisor which we denote by $\mathsf{p}_\rho$.

Thus, we have $p_i(x) = \mathsf{p}_\rho (x) h_i(x)$,   which implies that the singular set  $\Sing$ can be represented as the union of two subsets:
\begin{equation}\label{sing01}
\mathsf{Sing}_0 =\{ \mathsf{p}_\rho (x) =0\} \quad \mbox{and} \quad \mathsf{Sing}_1 =\{ h_i (x) =0, \  i=1,\dots, N\}.
\end{equation}

It is easy to see that  $\mathsf{p}_\rho (x)$ is a semi-invariant of the representation $\rho$.  This follows from the fact that the action of $G$ leaves the singular set $\mathsf{Sing}_0$ invariant and therefore may only multiply $\mathsf{p}_\rho$ by a character of $G$.  We will refer to this polynomial $\mathsf{p}_\rho$ as the {\it fundamental semi-invariant} of  $\rho$. 
The fundamental semi-invariant is closely related to the characteristic polynomial  $\charp_{a,b}$ of the pencil $R_{a+\lambda b}$:

\begin{proposition}\label{prop:fundinv}
Let $a,b\in V$ be such that the projective line $\alpha a+ \beta b$ does not intersect $\Sing_1$ and is not completely contained in $\Sing$ (i.e. contains at least one regular element). Then
$$
\charp_{a,b}(\alpha, \beta) = \mathsf{p}_\rho(\alpha a+ \beta b).
$$
\end{proposition}
\begin{proof}
As above, let $p_1(x), \dots, p_N(x)$ be the $r\times r$ minors of the matrix $R_x$. Then $p_i(x) = \mathsf{p}_\rho (x)h_i(x)$ for certain polynomials $h_i(x)$. Substituting $x = \alpha a+ \beta b$, we get
$$p_i(\alpha a + \beta b) = \mathsf{p}_\rho (\alpha a + \beta b)h_i(\alpha a + \beta b).$$ Further, notice that since the line $\alpha a+ \beta b$ contains a regular element, if follows that the rank of the pencil $R_{a + \lambda b}$ is equal to $\dimO$. Therefore, $\charp_{a,b}$ is, by definition, the greatest common divisor of the expressions $p_i(\alpha a + \beta b) $, where the latter are viewed as polynomials in $\alpha$ and $\beta$. So,
$$
\charp_{a,b} = \mathrm{gcd}\{ \mathsf{p}_\rho (\alpha a + \beta b)h_i(\alpha a + \beta b)\} = \mathsf{p}_\rho (\alpha a + \beta b)\cdot\mathrm{gcd}\{ h_i(\alpha a + \beta b)\}.
$$
For the sake of contradiction, assume the latter factor is non-constant. Then there exist $\alpha$ and $\beta$, not simultaneously equal to zero, such that $h_i(\alpha a + \beta b) = 0$ for every $i$ (here we use that the polynomials $h_i(\alpha a + \beta b)$ are homogeneous). But this means that the line $\alpha a + \beta b$ intersects $\Sing_1$, which is not the case. So,
$\charp_{a,b} =  \mathsf{p}_\rho (\alpha a + \beta b)$, up to a constant factor.
\end{proof}

\begin{corollary}
\label{aboutsjord}
The degree of the fundamental semi-invariant $\mathsf{p}_\rho$ is equal to the sum of the sizes of all Jordan blocks for a generic pencil $R_{a+\lambda b}$. 
\end{corollary}
\begin{proof}
Indeed, by Proposition \ref{prop:fundinv} for generic $x,a$ we get $\deg \mathsf{p}_\rho = \deg \charp_{a,b}$. But the degree of $\charp_{a,b}$ is exactly the sum of the sizes of Jordan blocks.
\end{proof}

 We now turn to the Kronecker part.
Following Section~\ref{S:JK_Operator},  for an arbitrary pencil $R_{a+\lambda b}$ we define the numbers $\svert(a,b)$ and $\shor(a,b)$.
These numbers computed for a generic pair $(a,b)$ are invariants of the representation $\rho$.  We denote them $\svert(\rho)$,  $\shor(\rho)$ and call
the {\it total Kronecker $v$-index and  $h$-index} of $\rho$. Similarly, let $\nh(\rho), \nv(\rho)$ be the numbers of horizontal and vertical indices for a generic pencil  $R_{a+\lambda b}$.

\begin{proposition}\label{nhnv}
\begin{enumerate} \item The number  $\nh(\rho)$ of horizontal indices of  $\rho$  is equal to $\dimSt$.

\item The number  $\nv(\rho)$ of vertical indices of  $\rho$ is equal to  $\codimO$.
\end{enumerate}
\end{proposition}

\begin{proof}
This follows from $\rk R_{a+\lambda b} = \dim \mathcal O_{a+ \lambda b}$ and Proposition~\ref{cor1}.  \end{proof}

\begin{remark}
\label{aboutjk}
Notice that  (cf. \eqref{sumofsum})
\begin{equation}
\label{sumofsum2}
\svert(\rho) + \shor(\rho) = \dim V  +   \dimSt - \deg \mathsf{p}_{\rho} = \dim\goth g + \codimO -  \deg \mathsf{p}_{\rho}.
\end{equation}
Similarly, if  $\rho$ has an open orbit, i.e. $\codimO =0$, then vertical Kronecker blocks are absent and we have
$\shor(\rho)= \dim \goth g - \deg \mathsf{p}_\rho$. If in addition $\codim\Sing\ge 2$, then $\shor(\rho)= \dim \goth g$.
\end{remark}

 \section{Degrees of invariant polynomials and vertical indices} \label{S:DegInvPolin}

This section contains our main results.
The first result gives a bound for degrees of invariant polynomials in terms of vertical indices.  In the case of the coadjoint representation it was obtained by A.~Vorontsov \cite{Vorontsov}.

\begin{theorem}[Lower bounds for degrees of polynomial invariants]\label{thm1}
\label{T:SumDeg_Polyn} Let $\rho : \g \to \gl(V)$ be a representation of a finite-dimensional Lie algebra $\g$ on a finite-dimensional vector space $V$.
Assume that $f_1, \dots, f_m$ are algebraically independent invariant polynomials of $\rho$, 
and $\deg f_1 \leq \dots \leq \deg f_m$. Let  also $\mathsf v_1(\rho) \leq \dots \leq
\mathsf v_q(\rho)$ be the vertical indices of  $\rho$.
Then 
\begin{equation}
\label{Vorontsovestim}
\deg f_i \geq \mathsf v_i(\rho)
\end{equation}
for $i =1, \dots, m$.
\end{theorem}

\begin{proof}
Let $(a,b) \in V \times V$ be generic.
 Expanding $f_i (a+\lambda x)$ in powers of $\lambda$, we get 
$$
f_i (a+\lambda x) = f_{i0}(x) + \lambda f_{i1}(x) +  \dots + \lambda^{m_i} f_{im_i}(x),  \quad m_i=\deg f_i.
$$
Furthermore, since $f_i$ is an invariant, we have

\begin{equation*}(R_a + \lambda R_b)^* \sum_{j=0}^{m_i} \lambda^j df_{ij}(b) =0,\end{equation*}
and since
$f_{i0}(x) = f_i(a)$, the first term in the latter sum is zero, so we can divide the latter equation by $\lambda$:
$$(R_a + \lambda R_b)^* \sum_{j=0}^{m_i - 1} \lambda^j df_{i,j+1}(b) =0.$$
Also notice that $df_{i,1}(x) =df_i(a)$ for any $x \in V$, so if we take $a$ such that the differentials of $f_i$'s at $a$ are independent, then Proposition~\ref{L:ChainsRestr} gives exactly the desired estimate \eqref{Vorontsovestim}.
\end{proof}
\begin{corollary} Let  $f_1, f_2, \dots, f_q$,  $q=\codimO$, be algebraically independent invariant polynomials of $\rho$. Then
\begin{equation}
\label{sumdeg}
\sum_{i=1}^q \deg f_i  \ge \svert(\rho).
\end{equation}
\end{corollary}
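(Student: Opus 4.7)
The plan is to combine the preceding theorem, which identifies $\trdeg Y_a$ with $\svert(\rho)$, with a dimension count on the natural generators of $Y_a$.

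First, I would verify that the hypothesis of the preceding theorem is met. Since $f_1,\dots,f_q$ are $q=\codimO$ algebraically independent invariants and $\trdeg\mathbb C[V]^{\goth g}$ never exceeds $\codimO$, equality $\trdeg\mathbb C[V]^{\goth g}=q$ holds and the $f_\alpha$'s form a transcendence basis of the invariants. Choosing $a\in V\setminus\Sing_1$ (possible since $\codim\Sing_1\ge 2$), the theorem then gives the identity $\trdeg Y_a=\svert(\rho)$.

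Next I would reduce to the smaller subalgebra $Y_a^{(0)}\subset Y_a$ generated only by the shifts $f_\alpha(x+\lambda a)$ of the transcendence basis, $\alpha=1,\dots,q$, $\lambda\in\mathbb C$. Any $f\in\mathbb C[V]^{\goth g}$ satisfies a nontrivial algebraic relation $P(f,f_1,\dots,f_q)=0$; substituting $x\mapsto x+\lambda a$ turns this into the identity $P(f(x+\lambda a),f_1(x+\lambda a),\dots,f_q(x+\lambda a))=0$ in $\mathbb C[V]$ for every $\lambda$, so each generator $f(x+\lambda a)$ of $Y_a$ is algebraic over $Y_a^{(0)}$. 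Consequently $\trdeg Y_a=\trdeg Y_a^{(0)}$.

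The final step is a generator count. Expand
\[
f_\alpha(x+\lambda a)=\sum_{j=0}^{d_\alpha}\lambda^j g_{\alpha,j}(x),\qquad d_\alpha=\deg f_\alpha,
\]
so that $Y_a^{(0)}$ is generated over $\mathbb C$ by the polynomials $g_{\alpha,j}(x)$. The crucial point is that the top coefficient $g_{\alpha,d_\alpha}(x)$ equals the top-degree homogeneous component of $f_\alpha$ evaluated at the fixed vector $a$ and is therefore a scalar constant, contributing nothing to the transcendence degree. Hence the non-constant generators of $Y_a^{(0)}$ are indexed by pairs $(\alpha,j)$ with $0\le j<d_\alpha$, giving
\[
\trdeg Y_a^{(0)}\le \sum_{\alpha=1}^q d_\alpha=\sum_{\alpha=1}^q\deg f_\alpha.
\]
Chaining the three relations yields \eqref{sumdeg}.

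The only mildly delicate point is the observation that $g_{\alpha,d_\alpha}$ is a constant rather than a genuine polynomial in $x$; without this saving the naive count would give $\sum(\deg f_\alpha+1)$ and one would miss the sharp bound by the summand $q$. Everything else is formal manipulation of the transcendence basis and an application of the preceding theorem.
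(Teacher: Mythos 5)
Your proof is correct and follows essentially the route the paper intends: the corollary is deduced from the identity $\trdeg Y_a=\svert(\rho)$ for $a\notin\Sing_1$ by counting the non-constant coefficients in the expansion of $f_\alpha(x+\lambda a)$ in powers of $\lambda$, which number exactly $\deg f_\alpha$ per invariant. The two points you make explicit --- that the hypothesis $\trdeg \mathbb C[V]^{\goth g}=\codimO$ of the preceding theorem is automatic from the existence of $q$ independent invariants, and that restricting $Y_a$ to the shifts of the chosen transcendence basis does not change the transcendence degree --- are precisely the details the paper leaves implicit.
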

\begin{proof}
This is obtained by adding up inequalities \eqref{Vorontsovestim}.
\end{proof}

Taking into account Remark~\ref{aboutjk}, we also get

\begin{corollary}
\label{indepofdf}
 Let $f_1, f_2, \dots, f_q$,  $q=\codimO$, be algebraically independent invariant polynomials of $\rho$. Suppose that the stabilizer of a regular point is trivial, i.e. $\mathrm{St}_{\mathrm{reg}} = \{0\}$. Then
\begin{equation}
\label{sumdeg2}
\sum_{i=1}^q  \deg f_i \geq \dim V - \deg \mathsf{p}_\rho.
\end{equation}
Moreover, if in addition $\codim\Sing \ge 2$, then
\begin{equation}
\label{sumdeg3}
\sum_{i=1}^q  \deg f_i \geq \dim V.
\end{equation}
\end{corollary}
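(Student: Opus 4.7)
The plan is to read off both inequalities from the preceding corollary by computing $\svert(\rho)$ explicitly under the given hypotheses, using the bookkeeping identity~\eqref{sumofsum2} from Remark~\ref{aboutjk}. So the starting point is the estimate $\sum_{\alpha=1}^q \deg f_\alpha \ge \svert(\rho)$, and the task reduces to showing that $\svert(\rho)$ equals $\dim V - \deg \mathsf{p}_\rho$ in case~(i), and $\dim V$ in case~(ii).

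For the first inequality~\eqref{sumdeg2}, the hypothesis $\St_{\mathrm{reg}}=\{0\}$ means that a regular operator $R_x$ has trivial kernel. Since horizontal Kronecker blocks in the canonical decomposition of a generic pencil $R_{x+\lambda a}$ account precisely for the kernels $\Ker R_{x+\lambda a} = \St_{x+\lambda a}$ for regular $\lambda$ (Corollary~\ref{cor1}), the total Kronecker $h$-index vanishes, i.e.\ $\shor(\rho)=0$. Simultaneously $\dimSt=0$. Substituting these values in the identity
\[
\svert(\rho)+\shor(\rho)=\dim V + \dimSt -\deg\mathsf{p}_\rho
\]
of~\eqref{sumofsum2} yields $\svert(\rho)=\dim V - \deg\mathsf{p}_\rho$, and combining with the previous corollary gives~\eqref{sumdeg2}.

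For the second inequality~\eqref{sumdeg3}, I would additionally invoke the hypothesis $\codim\Sing\ge 2$. By Proposition~\ref{aboutsjord}, the degree of $\mathsf{p}_\rho$ equals the sum of sizes of all Jordan blocks in a generic pencil $R_{x+\lambda a}$; but by the earlier proposition in Section~5, $\codim\Sing\ge 2$ is precisely the condition that a generic pencil has no Jordan blocks at all. Hence $\deg\mathsf{p}_\rho=0$, which combined with the computation of case~(i) gives $\svert(\rho)=\dim V$ and therefore~\eqref{sumdeg3}.

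There is no real obstacle here, as both inequalities follow by direct substitution into the identity~\eqref{sumofsum2}. The only thing worth making explicit is the identification $\shor(\rho)=0$ from $\St_{\mathrm{reg}}=\{0\}$ and the identification $\deg\mathsf{p}_\rho=0$ from $\codim\Sing\ge 2$; both are immediate consequences of statements already proved earlier in the paper.
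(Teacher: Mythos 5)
Your proposal is correct and follows the same route as the paper: the paper derives the corollary by citing Remark~\ref{aboutjk}, which contains exactly the computations you spell out ($\St_{\mathrm{reg}}=\{0\}$ forces $\shor(\rho)=0$ so that \eqref{sumofsum2} gives $\svert(\rho)=\dim V-\deg\mathsf{p}_\rho$, and $\codim\Sing\ge 2$ forces $\deg\mathsf{p}_\rho=0$), combined with the preceding estimate $\sum\deg f_\alpha\ge\svert(\rho)$. Your version simply makes explicit the steps the paper leaves to the cited remark.
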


\begin{remark}
An estimate similar to \eqref{sumdeg3} was obtained by F.~Knop and P.~Littelmann~\cite{Knop87}.
\end{remark}

Another immediate corollary of Theorem \ref{T:SumDeg_Polyn} is the following:

\begin{corollary}\label{sumvsind}
Suppose that there exist algebraically independent invariant polynomials  $f_1, f_2, \dots, f_q$,  $q=\codimO$, of a representation $\rho$ satisfying the condition
\begin{equation}\label{sumCond}
\sum_{i=1}^q \deg f_i  =  \svert (\rho). 
\end{equation}
Then
 \begin{equation}\label{sumCond2}
\mathsf v_i (\rho) =  \deg f_i.
\end{equation}
\end{corollary}
\begin{proof}
Indeed, \eqref{sumCond} is equivalent to  $$\sum_{i=1}^q \deg f_i  = \sum_{i=1}^q \mathsf v_i(\rho),$$
which, in view of \eqref{Vorontsovestim}, is equivalent to \eqref{sumCond2}.
\end{proof}

We now investigate in more detail the case when one of the equivalent conditions~\eqref{sumCond},~\eqref{sumCond2} hold.


\begin{theorem}[On the set where the invariants become dependent]\label{thm2}
Let $\rho : \g \to \gl(V)$ be a representation of a finite-dimensional Lie algebra $\g$ on a finite-dimensional vector space $V$.
Assume that $f_1, f_2, \dots, f_q$  are algebraically independent invariant polynomials of $\rho$  and 
$q = \codimO$. Let also  $\mathsf v_1(\rho), \dots, \mathsf v_q(\rho)$ be the vertical indices of  $\rho$.
 Then the following conditions are equivalent:
 
 \begin{enumerate}
 \item The degrees of $f_i$'s are equal to the vertical indices of $\rho$: $\deg f_i = \mathsf v_i(\rho)$.
 \item The sum of the degrees of $f_i$'s is equal to the total vertical index of $\rho$: $\sum \deg f_i = \sum \mathsf v_i(\rho)$.
 \item The set where the differentials $df_1, \dots, df_q$ are linearly dependent has codimension $\geq 2$ in $V$.
 \item The set where the differentials $df_1, \dots, df_q$ are linearly dependent is contained in the set $\Sing_1$, i.e. in the codimension  $\geq 2$ stratum of the set of singular points of $\rho$ in $V$.
 \end{enumerate}

\end{theorem}

The proof is based on the following lemma, which, in particular, gives an interpretation of the total Kronecker indices:

\begin{lemma}\label{factorizationLemma}
Let $r = \dimO$. For $x \in V$, consider the operator $\Lambda^r R_x : \Lambda^r\g \to \Lambda^r V$,  where $\Lambda^r R_x(\xi_1\wedge\dots\wedge \xi_r)=R_x(\xi_1)\wedge\dots\wedge R_x(\xi_r)$. Then
\begin{equation}\label{wedgeProductDecomp}
\Lambda^r R_x = \mathsf{p}_\rho(x) \cdot \omega_h(x) \otimes \omega_v(x),
\end{equation}
where $\omega_h, \omega_v$ are homogeneous polynomials in $x$ with values in $ \Lambda^r\g^*$,  $\Lambda^r V$ respectively. The polynomials $\omega_h, \omega_v$ are defined uniquely up to constant factors, not divisible by any non-trivial scalar polynomial of $x$, and are of degrees
\begin{align}\label{omegaDeg}
\begin{aligned}
\deg \omega_h &= \shor(\rho) - \nh(\rho) =  \shor(\rho) - \dimSt,\\
\deg \omega_v &= \svert(\rho) - \nv(\rho) =  \svert(\rho) - \codimO.
\end{aligned}
\end{align}
\end{lemma}
\begin{remark}
Formulas \eqref{omegaDeg} can be also rewritten as
$$
\deg \omega_h = \sum_{i=1}^{\nh(\rho)} \bigl(\mathsf h_i(\rho) - 1\bigr), \quad \deg \omega_v = \sum_{i=1}^{\nv(\rho)} \bigl(\mathsf v_i(\rho) - 1\bigr),
$$
i.e. the degrees of $\omega_h, \omega_v$ are given by sums of minimal indices. Also note that two different formulas for each of the degrees in \eqref{omegaDeg} are equivalent due to Proposition \ref{nhnv}.
\end{remark}
\begin{proof}[Proof of Lemma \ref{factorizationLemma}]
The matrix entries of the operator $\Lambda^r R_x$ are $r \times r$ minors of the matrix $R_x$. The greatest common divisor of those minors is, by definition, the fundamental semi-invariant  $\mathsf{p}_\rho(x)$. So, $
 \Lambda^r R_x = \mathsf{p}_\rho(x)  S(x)
 $, where $S(x)$ is a polynomial in $x$ with values in $\Hom( \Lambda^r\g, \Lambda^r V)$. Further, note that the rank of $R_x$ for regular $x$ is $r = \dimO$, so $\dim\Im R_x =r $, and hence the space $\Im \Lambda^r R_x =  \Lambda^r \Im R_x$ is generically one-dimensional. Therefore, the image of $S(x)$ is generically one-dimensional too. So, if we regard $S(x)$ as a matrix over the field of rational functions in $x$, its image has dimension $1$, which means that it is decomposable:
 $$
 S(x) =  \omega_h(x) \otimes \omega_v(x),
 $$
 where $\omega_h$, $\omega_v$ are rational functions in $x$ with values in $ \Lambda^r\g^*$ and $\Lambda^r V$ respectively. Furthermore, multiplying, if necessary, $\omega_h$ by a scalar rational function of $x$ and dividing  $\omega_v$ by the same function, we can arrange that $\omega_h$ is polynomial, and that the greatest common divisor of its coefficients is equal to $1$. But then  $\omega_v$ must be polynomial too, because the product $ \omega_h \otimes \omega_v = S$ is polynomial. Hence, existence of factorization \eqref{wedgeProductDecomp} is proved.\par
  To prove that $\omega_h$, $\omega_v$ are not divisible by any non-trivial scalar polynomial, recall that $\mathsf{p}_\rho$ is the greatest common divisor of the matrix entries of $ \Lambda^r R_x$. Therefore, the greatest common divisor of the matrix entries of $S = \frac{1}{\mathsf{p}_\rho}{ \Lambda^r R_x} $ is $1$, and hence the same is true for its factors $\omega_h$, $\omega_v$, as desired.\par
%
To prove uniqueness, we use that the representation of a rank $1$ operator as a tensor product is unique up to multiplying the first factor by an element of the base field, and dividing the second factor by the same element. Therefore, if we have another representation
%
$$
S(x) = \omega'_h(x) \otimes \omega'_v(x),
 $$
 then there exists a scalar rational function $\mu(x)$, such that 
 $$ \omega'_h(x) =\mu(x)\omega_h(x), \quad  \omega'_v(x) = \frac{1}{\mu(x)}\omega_v(x).$$
 If, moreover, $\omega'_h, \omega'_v$ are polynomials, then it follows that $\omega_h$ is divisible by the denominator of $\mu$, while  $\omega_v$ is divisible by the numerator of $\mu$ (we assume that $\mu$ is written in the reduced form). But we know that $\omega_h$, $\omega_v$ do not have non-trivial polynomial factors. Therefore, $\mu(x)$ must be constant, which proves that factorization~\eqref{wedgeProductDecomp} is unique. This also shows that the polynomials $\omega_h, \omega_v$ are homogeneous, because if they were not, then the expression $ \omega_h(\lambda x) \otimes \omega_v(\lambda x)$, divided by a suitable constant, would provide another factorization of $S(x)$. \par
 Now it remains to compute the degrees of $\omega_h ,\omega_v$. To that end, we restrict~\eqref{wedgeProductDecomp} to a $2$-plane  $x = \alpha a + \beta b$, where $a, b \in V \times V$ are generic. This gives
 \begin{equation}\label{wedgeProductDecomp2}
\Lambda^r R_{ \alpha a + \beta b} =\charp_{a,b}(\alpha, \beta) \cdot \omega_h(\alpha a + \beta b) \otimes \omega_v(\alpha a + \beta b),
\end{equation}
where we used Proposition \ref{prop:fundinv} to rewrite the first factor in the right-hand side. 
Now, consider the Jordan-Kronecker normal form of the pencil $R_{ \alpha a + \beta b}$. Take all columns of all vertical Kronecker blocks of non-zero width (these can be naturally viewed as vectors in $V$). In addition to that, take basis vectors in $V$ corresponding to rows of all Jordan blocks and all horizontal Kronecker blocks of non-zero height. Taken together, all these vectors form a basis in $\Im R_{ \alpha a + \beta b}$ for generic $\alpha, \beta$. Thus, their wedge product, which we denote by $f(\alpha, \beta)$, is a non-trivial polynomial of $\alpha, \beta$ valued in $\Im \Lambda^r R_{ \alpha a + \beta b}$. At the same time, by \eqref{wedgeProductDecomp2} the latter space  is generated by $\omega_v(\alpha a + \beta b)$, so we must have
$$
f(\alpha, \beta) = \mu(\alpha, \beta)\omega_v(\alpha a + \beta b),
$$
where $\mu$ is a rational function. Note also that since $\omega_v(x)$ is not divisible by a non-trivial scalar polynomial, it follows that its vanishing set in $\mathbb{P}V$ has codimension at least $2$. A generic projective line $\alpha a + \beta b$ does not intersect this set, which means that $\omega_v(\alpha a + \beta b)$ does not vanish at all (unless $\alpha = \beta = 0$), and hence $\mu(\alpha, \beta)$ is actually a polynomial function. This gives
$$
\deg \omega_v(\alpha a + \beta b) \leq \deg f(\alpha, \beta) = \svert(\rho) - \nv(\rho),
$$
and thus
\begin{equation}\label{omegaDegIneq1}
\deg \omega_v(x) \leq  \svert(\rho) - \nv(\rho),
\end{equation}
where we used that $\deg f = \svert(\rho) - \nv(\rho)$ by construction. 
Furthermore, an analogous argument for the dual pencil gives
\begin{equation}\label{omegaDegIneq2}
 \deg \omega_h(x) \leq  \shor(\rho) - \nh(\rho).
\end{equation}
At the same time
$$
\deg \omega_v +  \deg \omega_h = \deg \Lambda^r R_{x} - \deg  \mathsf{p}_\rho = \dimO - \deg  \mathsf{p}_\rho,
$$
while the sum of right-hand sides in \eqref{omegaDegIneq1} and \eqref{omegaDegIneq2} is
\begin{align*}
\svert(\rho) +   \shor(\rho) -  \nv(\rho) - \nh(\rho) = \svert(\rho) +  \shor(\rho) - \dimSt - \codimO \\ = \svert(\rho) +  \shor(\rho) - \dimSt + \dimO - \dim V =  \dimO - \deg  \mathsf{p}_\rho,
\end{align*}
where the first equality uses Proposition \ref{nhnv}, while the last one uses \eqref{sumofsum2}. So, the sum of left-hand sides in \eqref{omegaDegIneq1} and \eqref{omegaDegIneq2} is equal to the sum of right-hand sides, and thus both inequalities must be equalities, providing the desired formulas for $\deg \omega_v$, $\deg \omega_h$.
\end{proof}

\begin{proof}[Proof of Theorem \ref{thm2}]
Equivalence of Conditions 1 and 2 follows from Corollary \ref{sumvsind}. Also, implication 4 $\Rightarrow$ 3 is obvious. So, it suffices to prove implications  2 $\Rightarrow$ 4 and 3 $\Rightarrow$ 2. We begin with 2 $\Rightarrow$ 4. Let $\Omega \in \Lambda^{\dim V} V$ be a constant non-zero top-degree multi-vector on $V$ (i.e. a top-degree form on $V^*$). Let also $r = \dimO$. Define an $r$-vector $\omega$ on $V$ by
$$
\omega(x)(\xi_1, \dots, \xi_r) = \Omega(df_1(x), \dots, df_q(x), \xi_1, \dots, \xi_r).
$$
In other words, $\omega$ is a multi-vector dual to the form $df_1 \wedge \dots \wedge df_q$. Note that for generic $x$ the differentials $df_1(x), \dots, df_q(x)$ span the annihilator of the tangent space $T_x\mathcal O_x$ to the orbit of $\rho$. Therefore, $\omega \in \Lambda^r T_x\mathcal O_x$. But the latter space is generically one-dimensional and generated by the form $\omega_v(x)$ given by \eqref{wedgeProductDecomp}. Therefore, we must have
\begin{equation}\label{omegaVSomegav}
\omega(x) = \mu(x) \omega_v(x),
\end{equation}
where $\mu(x)$ is a rational function. But the form $\omega_v$ is not divisible by any scalar polynomial (see Lemma \ref{factorizationLemma}), so $\mu(x)$ is in fact a polynomial. Furthermore, we have
\begin{equation}\label{degOmega}
\deg \omega = \sum \deg df_i = \sum (\deg f_i - 1) =  \left(\sum \deg f_i\right) - \codimO.
\end{equation}
So, assuming Condition 2 of the theorem, i.e. $\sum \deg f_i = \svert(\rho)$, we get
$$
\deg \omega  = \svert(\rho) - \codimO.
$$
But the latter number is equal to $\deg \omega_v$ by Lemma \ref{factorizationLemma}, so we must conclude that $\mu(x)$ in~\eqref{omegaVSomegav} is a constant function, and zeros of $\omega(x)$ are the same as zeros of $\omega_v(x)$. Furthermore, zeros of $\omega$ are exactly those points where $df_1, \dots, df_q$ become linearly dependent, while zeros of $\omega_v$ are contained in the set of zeros of  $\omega_h(x) \otimes \omega_v(x) = \frac{1}{\mathsf{p}_\rho}{ \Lambda^r R_x}$. But the latter set is exactly $\Sing_1$, which proves the implication 2 $\Rightarrow$ 4. \par
We now prove 3 $\Rightarrow$ 2. Condition 3 says that $df_1, \dots, df_q$ are linearly dependent on a set of codimension $\geq 2$, which is equivalent to saying that the zero set of $\omega(x)$ has codimension $\geq 2$. But this is only possible if $\mu(x)$ in \eqref{omegaVSomegav} is a constant function, in which case we have $\deg \omega = \deg \omega_v$. In view of \eqref{degOmega}, this gives
$$
\sum \deg f_i = \deg \omega_v + \codimO = \svert(\rho),
$$
as desired.
\end{proof}

As can be seen from the proof, in general the set $\Sing_1$ consists of two components: the zeros of $\omega_v$ (which, under the conditions of Theorem \ref{thm2}, coincide with the set where the invariants become dependent), and the zeros of $\omega_h$. This immediately gives the following:

\begin{corollary}\label{coadj}
Let $\g$ be a finite-dimensional Lie algebra, and let $\rho : \g \to \gl(V)$ be either its coadjoint representation, or any finite-dimensional representation such that the stabilizer of a generic element in $V$ is trivial. 
Assume that $q=\codimO$ and $f_1, f_2, \dots, f_q$ are algebraically independent invariant polynomials of $\rho$ whose degrees are equal to the vertical indices: $\deg f_i = \mathsf v_i(\rho)$ (equivalently, $\sum \deg f_i = \sum \mathsf v_i(\rho)$). Then their differentials $df_1, df_2, \dots, df_q$ are linearly dependent exactly on the set $\Sing_1$.
\end{corollary}

\begin{remark}
In the case of the coadjoint representation, this is equivalent to a result of D.Panyushev  (Theorem~1.2. in  \cite{PY1}), proved in the case $\codim \Sing \geq 2$. \end{remark}
\begin{proof}[Proof of Corollary \ref{coadj}]
Indeed, for the coadjoint representation due to skew-symmetry of $R_x$ we have $\omega_h = \omega_v$, while in the trivial stabilizer case we have that $ \omega_h $ is of degree $0$ and hence constant. In both cases, the zero set of $\omega_h \otimes \omega_v$ is the same as the zero set of $\omega_v$, hence the result.\end{proof}

\begin{theorem}[On polynomiality of the algebra of invariants]\label{thm3}
Let $\rho : \g \to \gl(V)$ be a representation of a finite-dimensional Lie algebra $\g$ on a finite-dimensional vector space $V$.
Assume that $f_1, f_2, \dots, f_q$  are algebraically independent invariant polynomials of $\rho$ and $q=\codimO$. Let also  $\mathsf v_1(\rho), \dots, \mathsf v_q(\rho)$ be the vertical indices of  $\rho$.
Then we have the following:
 \begin{enumerate}
 \item If the degrees of $f_i$'s are equal to the vertical indices: $\deg f_i = \mathsf v_i(\rho)$ (equivalently, $\sum \deg f_i = \sum \mathsf v_i(\rho)$), then the algebra $\mathbb C[V]^{\goth g}$ of polynomial invariants of $\rho$ is freely generated by $f_1, f_2, \dots, f_q$ (i.e. it is a polynomial algebra).
 \item Conversely, if the algebra $\mathbb C[V]^{\goth g}$ of polynomial invariants of $\rho$ is freely generated by $f_1, f_2, \dots, f_q$, and, in addition, $\rho$ has no proper semi-invariants (i.e. any semi-invariant is an invariant), then the degrees of $f_i$'s are equal to the vertical indices.
 

 \end{enumerate}

\end{theorem}
 \begin{remark}
The condition on semi-invariants holds, for example, when $\g$ is perfect, i.e. coincides with its derived subalgebra. This condition cannot be omitted, as shown by the following example. Consider a $3$-dimensional Lie algebra with relations $[z,x] = x, [z,y] = -2y$. Then the algebra of invariants of its coadjoint representation is freely generated by a degree $3$ function $x^2y$, while the only vertical index is equal to $2$ (and, as predicted by Theorem \ref{thm2}, the gradient of the invariant vanishes on a hypersurface). Theorem \ref{thm3} does not apply in this case because $x$ and $y$ are proper semi-invariants.
 \end{remark}
 
  \begin{remark}
A similar result is obtained in \cite{JS} (see Corollary 5.5):  if the algebra of polynomial invariants is freely generated by  $f_1, f_2, \dots, f_q$, and, in addition, a certain semi-invariant is an invariant, then $\sum \deg f_i$ is equal to the degree of a certain form. A novel feature of our work is that we, first, compute the degree of the latter form (and hence show that the number $\sum \deg f_i$  is equal to the total vertical index), and second, provide formulas for the degrees $\deg f_i$ themselves, and not just for their sum.

 \end{remark}
\begin{proof}[Proof of Theorem \ref{thm3}]
Assume that the degrees of $f_i$'s are equal to the vertical indices. Take any polynomial invariant $f \in \mathbb C[V]^{\goth g}$ of $\rho$. Then, since $\trdeg \mathbb C[V]^{\goth g} = q$, it follows that $f $ is algebraically dependent with $f_1, f_2, \dots, f_q$. Given also that $f_1, f_2, \dots, f_q$ are independent outside of a subset of codimension $\geq 2$ (Theorem \ref{thm2}), it follows from Theorem 1.1 of \cite{PPY} that $f$ is a polynomial function of $f_1, f_2, \dots, f_q$. So, the polynomials $f_1, f_2, \dots, f_q$ generate $\mathbb C[V]^{\goth g}$, and since those polynomials are independent, the algebra is freely generated, as desired.\par
 
 Conversely, assume that $\mathbb C[V]^{\goth g}$ is freely generated by $f_1, f_2, \dots, f_q$. Assume, for the sake of contradiction, that $df_1, df_2, \dots, df_q$ are linearly dependent on a subset $S \subset V$ of codimension~$1$. Then the union of codimension $1$ irreducible components of $S$ is given by a single polynomial equation $h_f(x) = 0$. Since $f_1, f_2, \dots, f_q$ are invariants, it follows that $h_f$ is a semi-invariant. So, under the assumptions of the theorem, $h_f$ must be an invariant, and hence a scalar by Proposition 5.2. of \cite{JS}. Therefore, $f_1, f_2, \dots, f_q$ are actually dependent on a subset of codimension $\geq 2$, and the result follows by Theorem \ref{thm2}. \end{proof}

\section{JK invariants for standard representations of simple Lie algebras}
\label{S:SumsStandRepr}

In this section we consider classical simple Lie algebras, namely $\operatorname{sl}(n)$, $\operatorname{so}(n)$, and $\operatorname{sp}(n)$. For each of them we calculate the JK invariants for the sums of their standard representations. In each case we give the answer first and then show how it correlates with the general results from Sections~\ref{S:InterpretJK} and \ref{S:DegInvPolin}, most importantly, Theorems~\ref{T:SumDeg_Polyn}, \ref{thm2} and \ref{thm3} about vertical indices and the algebra of invariants.

In all cases we consider a matrix subalgebra $\mathfrak{g} \subset \operatorname{gl}(n)$ and the sum of its $m$ standard representations \[\rho^{\oplus_m}: \mathfrak{g} \to \operatorname{gl}(V^{\oplus_m}). \] Fix a basis of $V$, so that we could identify all the linear maps with matrices. Then an element of $V^{\oplus_m}$ is given by a matrix $X \in \operatorname{Mat}_{n \times m}$ and it defines a linear mapping  \begin{equation} \label{Eq:gl_n_StandReprSum_MatrixForm} \begin{gathered}  R_X: \mathfrak{g} \to \operatorname{Mat}_{n \times m}, \\ Y \to YX. \end{gathered} \end{equation}

\begin{lemma} \label{L:GenMatPair} Consider the sum of $m$ standard representations for any of the Lie algebras $\operatorname{sl}(n)$, $\operatorname{so}(n)$, or  $\operatorname{sp}(n)$. Define $n \times m$  matrices $X$ and $A$ as follows:  \begin{itemize}

\item If $m<n$, then \begin{equation} \label{Eq:GenMatPairRowMoreCol}  X = \left( \begin{matrix} 0 \\  I_m \end{matrix} \right), \qquad A = \left( \begin{matrix} -I_m \\ 0 \end{matrix} \right).\end{equation}

\item If $n=m$, then \begin{equation} \label{Eq:GenMatPairRowEqCol}    X = \left( \begin{matrix} \lambda_1 & & \\ & \ddots & \\ & & \lambda_n \end{matrix} \right), \qquad A = \left( \begin{matrix} -1 & & \\ & \ddots & \\ & & -1 \end{matrix} \right), \end{equation} where $\lambda_i$'s are distinct numbers.

\item If $m>n$, then \begin{equation} \label{Eq:GenMatPairRowLessCol}    X =\left( \begin{matrix} 0 & I_n \end{matrix} \right), \qquad A =\left( \begin{matrix} -I_n & 0 \end{matrix} \right).\end{equation}

Then the pencil $R_{X +\lambda A}$ is generic.

\end{itemize}

\end{lemma}

\begin{proof}  We need to show that there exists an open dense subset $U \subset \operatorname{Mat}_{n \times m} \times \operatorname{Mat}_{n \times m}$ such that for any pair $(\tilde X, \tilde A) \in U$, the pencil $R_{\tilde X +\lambda \tilde A}$ has the same algebraic type as the pencil $R_{X +\lambda A}$. What is well-known is that there exists an open dense subset $U \subset \operatorname{Mat}_{n \times m} \times \operatorname{Mat}_{n \times m}$ such that, for any pair $(\tilde X, \tilde A) \in U$, its orbit under the natural left-right action of $\operatorname{GL}(n)\times \operatorname{GL}(m)$ on $\operatorname{Mat}_{n \times m}$ contains a  pair $(X,A)$ of the above form, see e.g. \cite[p. 119]{Pok}. In other words, any generic pair of operators $\mathbb R^m \to \mathbb R^n$ can be written in the above form $(X,A)$ is a suitable basis. So, to show that the pencil $R_{X +\lambda A}$ is generic, it suffices to prove that for any $C \in \operatorname{GL}(n), D \in \operatorname{GL}(m)$ the JK decompositions of the pencils $R_X +\lambda R_A$ and $R_{CXD} + \lambda R_{CAD}$ coincide. The latter is equivalent to the existence of invertible linear mappings $\varphi : \g \to \g$ and $\psi : \operatorname{Mat}_{n\times m} \to \operatorname{Mat}_{n\times m} $ (where $\varphi$ does not have to be a Lie algebra automorphism) such that the following diagram commutes 


%
%
\[
\begin{CD}
\mathfrak{g}  @>R_{CXD} +  \lambda R_{CAD}>> \operatorname{Mat}_{n \times m} \\
@VV \varphi V @AA\psi A\\
\mathfrak{g} @>R_X + \lambda R_A >> \operatorname{Mat}_{n \times m}
\end{CD}.
\] 
For  $\operatorname{sl}(n)$, a suitable choice of $\varphi, \psi$ is \[\varphi(Y) = C^{-1}YC, \quad \psi(Z) = C ZD. \]Similarly for $\operatorname{so}(n)$ and $\operatorname{sp}(n)$ we take \[\varphi(Y) = C^{*}YC, \quad \psi(Z) = (C^{*})^{-1} ZD, \] where $C^*=C^\top$ for $\operatorname{so}(n)$ and $C^* = \Omega^{-1}C^\top\Omega$ for $\operatorname{sp}(n)$, with $\Omega$ being the matrix of the symplectic form given by formula \eqref{Eq:SympLieAlg_Elem} below. In all cases we see that the automorphisms  $\varphi, \psi$ intertwine the pencils $R_X +\lambda R_A$ and $R_{CXD} + \lambda R_{CAD}$, so those pencils have the same JK type, as desired.
\end{proof}

\subsection{Standard representations of  $\operatorname{sl}(n)$}
\label{SubS:SLStandRepr}


\begin{proposition} \label{T:JKSumStandardSLn} Let $\rho$ be the sum of $m$ standard representations of  $\operatorname{sl}(n)$.
\begin{enumerate}
\item Assume that $m<n$. Let $q$ be the integral part of the quotient $m\,/\, (n-m)$, and $r$ be the remainder. Then the JK invariants of $\rho$ consist of $n(n-m)-1$ horizontal indices \[ \underbrace{q+1, \dots, q+1}_{n(n-m-r) -(q+1)}, \quad \underbrace{q+2, \dots, q+2}_{nr+q},\]

%
%
%
%
\item Assume that $m=n$. Then the JK invariants of $\rho$ consist of one vertical index $\mathsf v_1 =n$ and $n$ distinct eigenvalues with $n-1$ Jordan $1 \times 1$ blocks corresponding to each eigenvalue.

\item Assume that $m>n$. Let $q$ be the integral part of the quotient $n\,/\,(m-n)$, and $r$ be the remainder.
Then the JK invariants of $\rho$ consist of the following $n(m-n)+1$ vertical indices:

\begin{itemize}
 \item If $r \not = 0$, then the vertical indices are 
  \[ \underbrace{q+1, \dots, q+1}_{n(m-n-r) + (q+2) }, \quad \underbrace{q+2, \dots, q+2}_{nr - (q+1)}.\]
 \item If $r = 0$, then the vertical indices are    \[  \underbrace{q, \dots, q}_{q+1}, \quad \underbrace{     q+1, \dots, q+1}_{n(m-n)-q}.\]
   \end{itemize}

\end{enumerate}

\end{proposition}
\begin{proof}
We need to compute the Jordan-Kronecker normal form for explicitly given operators $R_X$, $R_A$, where $X$, $A$ are defined in Lemma~\ref{L:GenMatPair}. This is a tedious, yet direct, calculation.
\end{proof}

Now, let us demonstrate how Theorem \ref{thm3} works in this case.
\begin{proposition}\label{slninv}
Let $\rho : \operatorname{sl}(n) \to \mathrm{gl}( \operatorname{Mat}_{n \times m}) $ be the sum of $m$ standard representations of  $\operatorname{sl}(n)$, and let $X \in \operatorname{Mat}_{n \times m} $. Then: 
\begin{enumerate}
\item If $m < n$, the algebra of invariants of $\rho$ is trivial.
\item If $m = n$, the algebra of invariants is freely generated by the polynomial $\det X$.
\item If $m = n + 1$, the algebra of invariants is freely generated by $n \times n$ minors of $X$.
\item If $m > n + 1$, the algebra of invariants is not freely generated. 
\end{enumerate}

\end{proposition}
\begin{remark}
Of course, these results are well-known. For $m > n$, the algebra of invariants of $\rho$ can be identified with the homogeneous coordinate ring of the Grassmannian $\mathrm{Gr}(n,m)$. If $m = n +1$, then the Grassmannian $\mathrm{Gr}(n,m)$ is the projective space $\mathbb{P}^n$, whose homogeneous coordinate ring is the ring of polynomials in $n+1$ variables, hence freely generated. For $m > n +1$, the homogeneous coordinate ring of the Grassmannian $\mathrm{Gr}(n,m)$ is generated by Pl\"ucker coordinates, subject to Pl\"ucker relations. 

\end{remark}
\begin{proof}[Proof of Proposition \ref{slninv}]
There is a clearly no invariants in the $m < n$ case, because in this case the action of the group $\operatorname{SL}(n)$ on $m$-tuples of vectors in $\mathbb R^n$ has an open orbit. (One could also say that in this case there is no vertical indices and hence no invariants by Theorem \ref{thm1}.) As for the case $m = n$, in this case we have one vertical index equal to $n$, and a polynomial invariant $\det X$ of degree $n$. So, by Theorem \ref{thm3}, the algebra of invariants is freely generated by $\det X$ (which is obvious in this case, because the transcendence degree of the algebra of invariants is equal to $1$, so it must be freely generated). 

Further, when $m = n + 1$, the vertical indices are $n, \dots, n$ ($n+1$ times), which again coincides with the degrees of the $n \times n$ minors of $X$. So, these minors generate the algebra of invariants by Theorem \ref{thm3} (they are clearly independent, as one can find a matrix $X$ with any prescribed values of these minors). 

Finally, consider the case $m > n + 1$. Observe that in this case there may also be no invariants of degree $k < n$. Indeed, assume that $f$ is such an invariant. Then there exist $k$ indices $1 \leq i_1 < \dots < i_k \leq n$ such that $f$ has non-trivial restriction to the subspace of matrices whose all columns, except for the ones with indices $i_1, \dots, i_k$, vanish. But this restriction must be an invariant of the representation  $\operatorname{sl}(n) \to  \mathrm{gl}(\operatorname{Mat}_{n \times k})$, and hence trivial, which is a contradiction. So, any non-trivial invariant of $\rho$ has degree at least $n$.  On the other hand, it is easy to see from Proposition \ref{T:JKSumStandardSLn} that at least one of the vertical indices is less than $n$. So, by the second part of Theorem~\ref{thm3}, the algebra of invariants is not freely generated, as desired (this statement applies in our case, because $ \operatorname{sl}(n) $ is simple and thus none of its representations admit proper semi-invariants).
\end{proof}

\subsection{Standard representations of  $\operatorname{so}(n)$ and $\operatorname{sp}(n)$ }

Let us now consider the Lie algebras $\operatorname{so}(n)$, which is the Lie algebra of skew-symmetic matrices, and $\operatorname{sp}(n)$. For $\operatorname{sp}(n)$ we always assume that $n$ is even $n=2k$ and we denote by $\operatorname{sp}(n)$ what would usually be denoted by $\operatorname{sp}(2k)$, that is the space of $n \times n$ matrices $X$ given by the equation 
\begin{equation} 
\label{Eq:SympLieAlg_Elem} X^\top \Omega  + \Omega X = 0, \qquad \Omega ={\begin{pmatrix}0&I_{\frac{n}{2}}\\-I_{\frac{n}{2}}&0\\
\end{pmatrix}}.
\end{equation} 
In other words, the elements of $\operatorname{sp}(n)$ have the form $ X = \Omega^{-1} S$, where $S$ is symmetric. Note that for both $\operatorname{so}(n)$ and $\operatorname{sp}(n)$ the index $n$ is the dimension of the underlying space. Since these Lie algebras correspond to skew-symmetric and symmetric matrices of the same size, most formulas for them differ by some choices of signs. For this reason it is convenient to write the formulas in terms of the number $\varepsilon$, which is equal to $+1$ for  $\operatorname{sp}(n)$ and $-1$ for $\operatorname{so}(n)$.

\begin{proposition} \label{T:JKSumStandardSOn} Let $\rho$ be the sum of $m$ standard representations of  $\operatorname{so}(n)$ or $\operatorname{sp}(n)$. 

\begin{enumerate}

\item  Assume that $m<n$. Let $q$ be the integral part of the quotient $m\,/\, (n-m)$, and $r$ be the remainder.  Then  the JK invariants of $\rho$ are 

 \begin{itemize}
 
 \item   $\displaystyle \frac{(n-m)(n-m + \varepsilon)}{2}$ horizontal indices: \[ \underbrace{2q+1, \dots, 2q+1}_{\frac{(n-m-r)(n-m-r + \varepsilon)}{2}}, \quad \underbrace{2q+2, \dots, 2q+2}_{(n-m-r)r}, \quad \underbrace{2q+3, \dots, 2q+3}_{ \frac{r(r + \varepsilon)}{2}},\] 

\item $\displaystyle \frac{m(m - \varepsilon)}{2}$ vertical indices $\mathsf v_i =2$.

\end{itemize}

\item Assume that $m=n$. Then, in the $\operatorname{so}(n)$ case, the JK invariants of $\rho$ consist of $\displaystyle \frac{n(n+1)}{2} $ vertical indices:   \[ \underbrace{1, \dots, 1}_{n}, \quad \underbrace{2, \dots, 2}_{\frac{n(n-1)}{2} },\]  while in the $\operatorname{sp}(n)$ case  the JK invariants are  $\displaystyle \frac{n(n-1)}{2}$  vertical indices $\mathsf v_i =2$ and $n$  Jordan $1 \times 1$ blocks with different eigenvalues.

\item Assume that $m>n$.  Then the JK invariants of $\rho$ are $\displaystyle n(m-n) + \frac{n(n - \varepsilon)}{2}$ vertical indices: \[ \underbrace{1, \dots, 1}_{(m-n - \varepsilon)n}, \qquad \underbrace{2, \dots, 2}_{\frac{n(n + \varepsilon)}{2} }.\]

\end{enumerate}
\end{proposition}

The proof of this result is analogous to that of Proposition \ref{T:JKSumStandardSLn}. Now, we use this to study the algebra of invariants. As in the $ \operatorname{sl}(n)$ case, the structure of the algebra of invariants for sums of standard representations of $ \operatorname{so}(n)$ and $ \operatorname{sp}(n) $ is well-known, and we discuss it here for the sole purpose of demonstrating the power of Theorem \ref{thm3}.
%
%

\begin{proposition} \label{Prop:SOnInvar} Let $\rho : \operatorname{so}(n) \to \mathrm{gl}( \operatorname{Mat}_{n \times m})$ be the sum of $m$ standard representations of  $\operatorname{so}(n)$, and let $X \in \operatorname{Mat}_{n \times m} $. Then: 

\begin{enumerate}

\item For $m<n$, the algebra of invariants  of $\rho$ is freely generated by $\frac{m(m+1)}{2}$ pairwise inner products of columns of $X$.

\item For $m \geq n$, the algebra of invariants  of $\rho$ is not freely generated.
\end{enumerate}

\end{proposition}

\begin{proof}[Proof of Proposition~\ref{Prop:SOnInvar}]
The first statement is a direct consequence of Theorem~\ref{thm3}, since all pairwise inner products of columns of $X$ have degree $2$ and hence coincide with vertical indices. To prove the second statement, observe that the same restriction argument as we used in the proof of Proposition \ref{slninv} shows that there can be no invariants of degree $1$. On the other hand, some of the vertical indices are equal to $1$, so  the algebra of invariants is not freely generated by Theorem~\ref{thm3}.
 \end{proof}

\begin{remark}  

For $m\geq n$, the algebra of invariants of $\rho$ is known to be generated by  pairwise inner products of columns of $X$, supplemented by $n \times n$ minors of $X$. It is easy to see that pairwise inner products alone do not generate the algebra of invariants, as any minor of $X$ can be expressed as the square root of the Gram determinant of its columns, which is a non-polynomial function in terms of the pairwise inner products. 

%

 \end{remark} 

%
%
%

%
%

The next statement is proved similarly to Proposition~\ref{Prop:SOnInvar}.

\begin{proposition} \label{Prop:SPnInvar} Let $\rho : \operatorname{sp}(n) \to \mathrm{gl}( \operatorname{Mat}_{n \times m})$  be the sum of $m$ standard representations of  $\operatorname{sp}(n)$, and let $X \in \operatorname{Mat}_{n \times m} $. Then: 

\begin{enumerate}

\item For $m \leq n + 1$, the algebra of invariants  of $\rho$ is freely generated by $\frac{m(m-1)}{2}$ pairwise symplectic products of columns of $X$.

\item For $m > n + 1$, the algebra of invariants  of $\rho$ is not freely generated.\end{enumerate}

\end{proposition}

\begin{remark}  

In contrast to the $\operatorname{so}(n)$ case, the minors of $X$ in the symplectic case can be expressed in terms of pairwise symplectic products. Namely, any minor is equal to the Pfaffian of the symplectic Gram matrix of its columns. So, the algebra of invariants of $\rho$ is generated just by the symplectic products. The relations between these symplectic products for $m > n + 1$ are given by Pl\"ucker relations between minors of $X$.


%

 \end{remark} 

%
%

\end{document}